\def\csname ver@etex.sty\endcsname{3000/12/31}
\renewcommand\@seccntformat[1]{\csname the#1\endcsname.\quad}
\newtheorem{theorem}{Theorem}
\newtheorem*{theorem*}{Theorem}
\newtheorem{corollary}[theorem]{Corollary}
\newtheorem{lemma}[theorem]{Lemma}
\newtheorem*{lemma*}{Lemma}
\newtheorem*{definition*}{Definition}
\newtheorem*{fact*}{Fact}
\DeclareMathOperator*{\argmin}{\arg\!\min}
\newcommand\E{\mathbb{E}}
\newcommand\diag{\operatorname{diag}}
\newcommand\var{\operatorname{var}}
\newcommand\R{\mathbb{R}}
\newcommand\X{\mathcal{X}}
\newcommand\Y{\mathcal{Y}}
\newcommand\T{{\scriptscriptstyle{\mathsf{T}}}}
\newcommand\corr{\operatorname{corr}}
\newtheorem{observe}{Observation}[section]
\newtheorem{remark1}[observe]{Remark}
\def\1{\mathbbm{1}}
\title{\vspace{-.6in}The piranha problem: Large effects swimming in a small pond\footnote{To appear in the {\em Notices of the American Mathematical Society}.  We thank Lauren Kennedy, the editor, and four reviewers for helpful discussion. CT and DH acknowledge NSF grant CCF-1740833, a JP Morgan Faculty Award, and a Sloan Research Fellowship. Much of this work was done while CT was at Columbia University. AG acknowledges ONR grant N000142212648. BG acknowledges NSF grants 2051246 and 2153019.  AV acknowledges Research Council of Finland project 340721.
Affiliations:
Memorial Sloan Kettering Cancer Center (CT), Department of Statistics, Columbia University (PG and AG), Department of Political Science, Columbia University (BG and AG), Department of Computer Science, Aalto University (AV), Department of Computer Science, Columbia University (DH).}\vspace{-.1in}}
\author{Christopher $\!$Tosh, Philip $\!$Greengard, Ben $\!$Goodrich, Andrew $\!$Gelman, Aki $\!$Vehtari, Daniel $\!$Hsu\vspace{-.1in}}
\date{2 Apr 2024}
\begin{document}

\maketitle

\vspace{-30pt}
\section{Background}\label{background}
\vspace{-3pt}

In this work, we discuss an inevitable consequence of having a stable system in which many explanatory variables have large effects: these variables must have large interactions which will be unlikely to cancel either other out to the extent required for general stability or predictability.
We call this type of result a ``piranha theorem'' \citep{gelman2017}, the analogy being the folk belief that if one has a large number of piranhas (representing large effects) in a single fish tank, then one will soon be left with far fewer piranhas.
If there is some outcome for which studies find large and consistent effects of many different inputs, then we can conclude that some of these effects are smaller than claimed or that multiple explanatory variables are essentially measuring the same phenomenon.

Identifying and measuring the effects of explanatory variables are central problems in statistics and drive much of the world's scientific research.
Despite the substantial effort spent on these tasks, there has been comparatively little work on addressing a related question: how many explanatory variables can have large effects on an outcome?  The present work follows up on \citet{cornfield:1959} and \citet{ding:2014}, considering quantitative constraints in the effects of additional variables.

Consider, for example, the problem of explaining voters' behaviors and choices.
Researchers have identified and tested the effects of internal factors such as fear, hope, pride, anger, anxiety, depression, and menstrual cycles, as well as external factors such as droughts, shark attacks, and the performance of local college football teams. Many of these findings have been questioned on methodological grounds, but they remain in the public discourse.  Beyond the details of these particular studies, it is natural to ask if all of these effects can be real in the sense of representing patterns that will consistently appear in the future.

The implication of the published and well-publicized claims regarding ovulation and voting, shark attacks and voting, college football and voting, etc., is not merely that some voters are superficial and fickle. No, this literature claims that seemingly trivial or irrelevant factors have {\em large and consistent} effects, and this runs into the problem of interactions. For example, the effect on your vote of the local college football team losing could depend crucially on whether there's been a shark attack lately, or on what's up with your hormones on election day. Or the effect could be positive in an election with a female candidate and negative in an election with a male candidate. Or the effect could interact with your parents' socioeconomic status, or whether your child is a boy or a girl, or the latest campaign ad, or any of the many other factors that have been studied in the evolutionary psychology and political psychology literatures.  If such effects are large, it is necessary to consider their interactions, because the average effect of a factor in any particular study will depend on the levels of all the other factors in that environment. Similarly, \citet{mellon2020rain} has argued against naive assumptions of causal identification in economics, where there is a large literature considering rainfall as an instrumental variable, without accounting for the implication that these many hypothesized causal pathways would, if taken seriously, represent violations of the assumptions of the model.  Even if a particular experiment or observational study analyzes only one causal factor, the existence of potential interactions (as indeed are implied if one were to take the social science literature at face value) destroys the implicit assumption that an effect measured under some particular set of conditions can be interpreted as a general or persistent effect.

These concerns are particularly relevant in social science, where the search for potential causes is open-ended.
Our work here is partly motivated by the replication crisis, which refers to the difficulties that many have had in trying to independently verify established findings in social and biological sciences.
Many of the explanations for the crisis have focused on various methodological issues, such as low power and unrecognized researcher degrees of freedom~\citep{SNS11}.
Beyond the criticisms of practice and suggested fixes, these works have also provided much needed statistical intuition.
Groups of studies that claim to have found a variety of important explanatory variables for a single outcome should be scrutinized, particularly when the dependencies among the explanatory variables have not been investigated. 

This article collects several mathematical results regarding the distributions of correlations or coefficients, with the aim of fostering further work on statistical models for environments with a multiplicity of effects.  What is novel in this paper is not the theorems themselves but rather viewing them in the context of trying to make sense of clusters of research studies that claim to have found large effects.

There are many ways to capture the dependence among random variables, and thus we should expect there to be a correspondingly large collection of piranha theorems.
We formalize and prove piranha theorems for correlation, regression, and mutual information in Section~\ref{sec:theorems}.
These theorems illustrate the general phenomena at work in any setting with multiple causal or explanatory variables, with implications for the replication crisis in social science.

\vspace{-6pt}
\section{Piranhas and butterflies}
\vspace{-3pt}

A fundamental tenet of social psychology and behavioral economics, at least how it is presented in the news media, and taught and practiced in many business schools, is that small ``nudges,'' often the sorts of things that we might not think would affect us at all, can have big effects on behavior.

The model of the world underlying these claims is not just the ``butterfly effect'' that small changes can have big effects; rather, it's that small changes can have big and predictable effects, a sort of ``button-pushing'' model of social science, the idea that if you do $A$, you can expect to see $B$.

In response to this attitude, we present the piranha argument, which states that there can be some large and predictable effects on behavior, but not a lot, because, if there were, then these different effects would interfere with each other, a ``hall of mirrors'' of interactions \citep{cronbach:1975} that would make it hard to detect any consistent effects of anything in observational data.

In a similar vein, \cite{cook2018} writes:

\vspace{-8pt}
\begin{quotation}\noindent
``The butterfly effect is the semi-serious claim that a butterfly flapping its wings can cause a tornado half way around the world. It's a poetic way of saying that some systems show sensitive dependence on initial conditions, that the slightest change now can make an enormous difference later \dots Once you think about these things for a while, you start to see nonlinearity and potential butterfly effects everywhere. There are tipping points everywhere waiting to be tipped!''
\end{quotation}
\vspace{-8pt}
\noindent%
But, Cook continues, it's not so simple:
\vspace{-7pt}
\begin{quotation}\noindent
``A butterfly flapping its wings usually has no effect, even in sensitive or chaotic systems. You might even say especially in sensitive or chaotic systems. Sensitive systems are not always and everywhere sensitive to everything. They are sensitive in particular ways under particular circumstances and can otherwise be resistant to influence.\,\dots The lesson that many people draw from their first exposure to complex systems is that there are high-leverage points, if only you can find them and manipulate them. They want to insert a butterfly at just the right time and place to bring about a desired outcome. Instead, we should humbly evaluate to what extent it is possible to steer complex systems at all. We should evaluate what aspects can be steered and how well they can be steered. The most effective intervention may not come from tweaking the inputs but from changing the structure of the system.''
\end{quotation}
\vspace{-6pt}

Effects in social science vary across people and scenarios and over time, and they can be represented by probability distributions.  Cook's advice to think about ``the structure of the system'' echoes recommendations from the literature on statistical quality control that system-level variation puts a limit on what can be learned about the average effects of particular interventions.  In the presence of possible interactions, there is no reason to expect stability of treatment effects.

\vspace{-6pt}
\section{Example:  hypothesized effect sizes in social priming}\label{priming}
\vspace{-3pt}

We demonstrate the possibility of quantitative analysis of the piranha problem using the example of an influential experiment from 1996 in which participants were given a scrambled-sentence task and then were surreptitiously timed when walking away from the lab.
Students whose sentences included elderly-related words such as ``worried,'' ``Florida,'' ``old,'' and ``lonely'' walked an average of 13\% more slowly than students in the control condition, and the difference was statistically significant.

This experimental claim is of historical interest in psychology in that, despite its implausibility, it was taken seriously for many years and received thousands of citations, but it failed to replicate and is no longer generally believed to represent a real effect; for background see \cite{wagenmakers2015}.  Now we understand such apparently statistically-significant findings as the result of selection with many researcher degrees of freedom \citep{SNS11}.
Here, though, we will take the published claim at face value and also work within its larger theoretical structure, under which weak indirect stimuli can produce large effects.

An effect of 13\% on walking speed is not in itself huge; the difficulty comes when considering elderly-related words as just one of many potential stimuli.  Here are just some of the factors that have been published in the social priming and related literatures as having large effects on attitudes and behavior:  hormones, subliminal images, news of football games and shark attacks, a chance encounter with a stranger, parental socioeconomic status, weather, the last digit of one's age, the sex of a hurricane name, the sexes of siblings, the position in which a person is sitting, and many others.  See \cite{controversies} for references to these claims, along with other papers criticizing or refuting them.  A common feature of these examples is that the stimuli have no clear direct effect on the measured outcomes, and in many cases the experimental subject is not even aware of the manipulation.  Based on these examples, one can come up with dozens of other potential stimuli that fit the pattern.  In addition to walking speed being affected by elderly-related words, one could also consider word lengths (with longer words corresponding to slower movement), sounds of words (with smooth sibilance motivating faster walking), subject matter (sports-related words as compared to sedentary words), affect (happy compared to sad words, or calm compared to angry), words related to travel (inducing faster walking) or invoking adhesives such as tape or glue (inducing slower walking), and so on.  Similarly, one can consider different sorts of incidental events, not just encounters with strangers but also a ringing phone or knocking at the door, the presence of a male or female lab assistant (which could have a main effect or interact with the participant's sex), a newspaper or magazine on a nearby table, \emph{ad infinitum}.

Now we can invoke the piranha principle.  Imagine 100 possible stimuli, each with an effect of 13\% on walking speed, all of which could arise in a real-world setting where we encounter many sources of text, news, and internal and external stimuli.  If each stimulus corresponds to two equally probable states with effects of $\pm 0.5\log(1.13)$ on log walking speed, and these effects are independent in the wild, then the sum of these will be approximately normally distributed with standard deviation $0.5\sqrt{100}\log(1.13)=0.61$, thus walking speed could easily be multiplied or divided by $e^{0.61} = 1.8$ based on a collection of arbitrary stimuli that are imperceptible to the person being affected.  And this factor of 1.8 could be made arbitrarily large by simply increasing the number of potential primes.

It is outrageous to think that walking speed could be doubled or halved based on a random collection of unnoticed and essentially irrelevant stimuli---but that is the implication of the embodied cognition literature.  It is basically a Brownian motion model in which the individual inputs are too large to work out.

We can think of five ways to avoid the ridiculous conclusion.   The first possibility is that the different factors could interact or interfere in some way so that the variance of the total effect is less than the sum of the variances of the individual components, or multiple explanatory variables could be essentially measuring the same phenomenon.  Second, effects could be much smaller.  Change those 13\% effects to 1\% effects and you can get to more plausible totals, in the same way that real-world Brownian oscillations are tolerable because the impact of each individual molecule in the liquid is so small.  Third, one could reduce the total number of possible influences.  If there were only 10 possible stimuli rather than 100 or 1000 or more, then the total effect could fall within the range of plausibility.  Fourth, there could be a distribution of effects with a few large influences and a long tail of relatively unimportant factors, so that, when correctly translated to standardized population effect sizes, most treatment effects are already small, and the infinite sum has a reasonable bound. Fifth, multiple explanatory variables could be essentially measuring the same phenomenon.

All these options have major implications for the study of social priming and, more generally, for causal inference in an open-ended setting with large numbers of potential influences.  If large interactions are possible, this suggests that stable individual treatment effects might be impossible to find:  a 13\% effect of a particular intervention in one particular experiment might be $-18\%$ in another context or $+2\%$ in the presence of some other unnoticed factor, and this in turn raises questions about the relevance of any particular study.  If effects are much smaller than reported, this suggests that existing studies are extremely underpowered, so that published estimates are drastically overestimated and often in the wrong direction \citep{gelman2014}, thus essentially noise.  At the same time, a restriction of the universe of potential stimuli would require an overhaul of the underlying theoretical framework in which just about any stimulus can cause a noticeable change.  For example, if we think there cannot be more than five or ten large effects on walking speed, it would seem a stretch that unnoticed words in a sentence scrambling test would be one of these special factors.  Finally, if the distribution of average effects is represented by a long series, most of whose elements are tiny (either because of small individual effects or because any large effects occur infrequently in the general population, as with rare diseases or short-term interventions with rapidly decaying effects), this implies a prior distribution on average effect sizes with a spike near zero, which in turn would result in near-zero estimated population effect sizes in most cases.  Our point is not that all effects are zero but rather that in a world of essentially infinitely many possible causal factors, some external structure must be applied in order to identify stable effects  from finite samples.

\vspace{-6pt}
\section{Piranha theorems}
\vspace{-3pt}
\label{sec:theorems}

In this section, we present piranha theorems for linear and nonlinear effects.
We consider two different ways of measuring linear effects.
We first show that it is impossible for a large number of explanatory variables to be correlated with some outcome variable unless they are highly correlated with each other. 
Second, we show that if a set of explanatory random variables is plugged into a linear regression, the $\ell_2$-norm $\|\beta\|$ of the least-squares coefficient vector $\beta$ can be bounded above in terms of the eigenvalues of the second-moment matrix of the predictors.
Thus, there can only be so many individual coefficients with a large magnitude.
Finally, we consider a general nonlinear form of dependency, mutual information, and present a corresponding piranha theorem for that measure.

\vspace{-6pt}
\subsection{Correlation}
\vspace{-3pt}

The first type of pattern we consider is correlation.
In particular, we will show that if all the covariates are highly correlated with some outcome variable, then there must be a reasonable amount of correlation among the covariates themselves.
This is formalized in the following theorem, which is known as Van der Corput's inequality \citep{T14}.
We offer a proof here for completeness.
\vspace{-3pt}

\begin{theorem}[Van der Corput's inequality]
  \label{thm: correlation piranha theorem}
  If $X_1, \dotsc, X_p, y$ are real-valued random variables with finite nonzero variance, then
\vspace{-9pt}
  \[ \sum_{i=1}^p |\corr(X_i,y)| \ \leq \ \sqrt{p + \sum_{i \neq j} |\corr(X_i,X_j)|}.  \]
\vspace{-9pt}

\noindent%
In particular, if $|\corr(X_i,y)| \geq \tau$ for each $i=1,\dotsc, p$, then $\sum_{i \neq j} |\corr(X_i,X_j)| \geq p(\tau^2 p - 1)$.
\end{theorem}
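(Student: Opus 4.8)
The plan is to recognize that correlation is an inner product on the space of standardized random variables and then apply the Cauchy--Schwarz inequality. Working in the Hilbert space of centered, finite-variance random variables with $\langle U, V\rangle = \E[(U-\E U)(V - \E V)]$, I would standardize each variable to unit variance, writing $\tilde X_i$ and $\tilde y$ for the normalized versions, so that $\corr(X_i,y) = \langle \tilde X_i, \tilde y\rangle$ and $\corr(X_i,X_j) = \langle \tilde X_i, \tilde X_j\rangle$; the finite nonzero variance hypothesis is exactly what makes this normalization legitimate.

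The key device is to fold the sign of each correlation with $y$ into the vector itself. Setting $u_i = \operatorname{sign}(\corr(X_i,y))\,\tilde X_i$, each $u_i$ is a unit vector and $|\corr(X_i,y)| = \langle u_i, \tilde y\rangle$. Summing and applying Cauchy--Schwarz against the unit vector $\tilde y$ gives $\sum_i |\corr(X_i,y)| = \bigl\langle \sum_i u_i, \tilde y\bigr\rangle \le \bigl\|\sum_i u_i\bigr\|$. Expanding $\bigl\|\sum_i u_i\bigr\|^2 = \sum_{i,j}\langle u_i,u_j\rangle$, the $p$ diagonal terms each contribute $1$, while each off-diagonal term is at most $|\corr(X_i,X_j)|$ in absolute value; this yields the bound $p + \sum_{i\ne j}|\corr(X_i,X_j)|$ inside the square root and hence the first inequality.

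The ``in particular'' claim then follows by plugging $|\corr(X_i,y)| \ge \tau$ into the left-hand side to get $p\tau \le \sqrt{p + \sum_{i\ne j}|\corr(X_i,X_j)|}$, squaring, and rearranging. I expect the main subtlety to lie in the sign bookkeeping: the signs must be chosen relative to $y$ so that the left-hand sum collapses into a single inner product, after which one simply bounds the resulting signed cross-correlations $\operatorname{sign}(\corr(X_i,y))\operatorname{sign}(\corr(X_j,y))\corr(X_i,X_j)$ by their absolute values. Everything else is a routine expansion, and the only place where slack is introduced is precisely this discarding of signs on the covariate cross-correlations.
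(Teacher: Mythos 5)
Your proposal is correct and is essentially the paper's own proof: your sign-folded vectors $u_i$ are exactly the paper's variables $Z_i$, and the Cauchy--Schwarz step against the unit-variance $y$ followed by expansion of $\bigl\|\sum_i u_i\bigr\|^2$ and discarding of signs on the cross terms matches the paper line for line, with only a cosmetic difference in phrasing (Hilbert-space inner products versus expectations). The squaring-and-rearranging derivation of the ``in particular'' clause is likewise the same.
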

\vspace{-9pt}
\begin{proof}
  Without loss of generality, we may assume that $X_1,\dotsc,X_p,y$ have mean zero and unit variance.
  Define $Z_1,\dotsc,Z_p$ by
  \vspace{-9pt}
  \[ 
    Z_i \ = \
    \begin{cases}
      \hphantom{-}X_i & \text{if $\E(yX_i) > 0$,} \\
      -X_i & \text{else.}
    \end{cases}
  \]
\vspace{-6pt}

\noindent%
Thus $\E(yZ_i) = |\E(yX_i)|$ and $\E(Z_i^2) = \E(X_i^2)$ for each $i=1,\dotsc,p$.
  By Cauchy-Schwarz,
  \vspace{-6pt}
  \[
    \sum_{i=1}^p \E( y Z_i)
    \ = \  \E\left( y  \sum_{i=1}^p  Z_i \right)
    \ \leq \ \sqrt{ \E\left( \left( \sum_{i=1}^p Z_i \right)^2 \right) } .
  \]
\vspace{-18pt}

\noindent%
Therefore,
  \vspace{-9pt}
  \[
    \sum_{i=1}^p |\E(y X_i )|
    \ = \  \sum_{i=1}^p \E(y Z_i )
    \ \leq \ \sqrt{ \sum_{i=1}^p \E(Z_i^2) + \sum_{i \neq j} \E(Z_i Z_j) }
    \ \leq \ \sqrt{ p + \sum_{i \neq j} |\E(X_i X_j)| } .
  \]
  \vspace{-12pt}
  
\noindent%
Rearranging gives us the theorem statement.
\end{proof}
\vspace{-6pt}

A direct consequence of Theorem~\ref{thm: correlation piranha theorem} is that if $X_1, \dotsc, X_p$ are independent (or uncorrelated) random variables and each has correlation at least $\tau$ with $y$, then $\tau \leq 1/\sqrt{p}$.

In some situations, the outcome may change from study to study, for example a program evaluation in economics might look at employment, income, or savings; a political intervention might target turnout or vote choice; or an education experiment might look at several tests.  Although the different outcomes in a study are not exactly the same, we might reasonably expect them to be highly correlated. However, if we have mean-zero and unit-variance random variables $x,y,z$ satisfying $\E(xy) \geq \tau$ and $\E(yz) \geq 1 - \epsilon$, then

\vspace{-9pt}
\[ \E(xz) \ = \ \E(x (z-y + y)) \ \geq \ \tau + \E(x(z - y)) , \]
\vspace{-15pt}

\noindent%
and, by Cauchy-Schwarz,
\vspace{-6pt}
\[ \E(x(z - y))^2 \ \leq \ \E(x^2) \E((z- y)^2) \ \leq \ 2 - 2(1-\epsilon). \]
Thus, $\E(xz) \geq \tau - \sqrt{2\epsilon}$.
This gives the following corollary of Theorem~\ref{thm: correlation piranha theorem}.

\begin{corollary}
\label{cor: multiple correlation piranha corollary}
Suppose $X_1, Y_1, \dotsc, X_p, Y_p$ are real-valued random variables with finite nonzero variance. If $\corr(Y_i,Y_j) \geq 1 - \epsilon$ and $|\corr(X_i,Y_i)| \geq \tau$ for $i,j=1,\dotsc,p$, then $\sum_{i \neq j} |\corr(X_i,X_j)| \geq p((\tau-\sqrt{2\epsilon})^2 p - 1)$.
\end{corollary}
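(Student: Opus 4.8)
The plan is to reduce the corollary to a single application of Theorem~\ref{thm: correlation piranha theorem} by collapsing the many near-identical outcomes $Y_1,\dotsc,Y_p$ onto one common reference outcome. As in the proof of the theorem, I would first standardize: since every quantity appearing in the statement is a correlation, and hence invariant under affine rescaling of each variable, I may assume without loss of generality that $X_1,Y_1,\dotsc,X_p,Y_p$ all have mean zero and unit variance, so that each $\corr(\cdot,\cdot)$ is just the expectation of the corresponding product. I would also dispose of the absolute values in the hypothesis $|\corr(X_i,Y_i)|\ge\tau$ by a sign normalization: replacing $X_i$ with $-X_i$ wherever $\E(X_iY_i)<0$ leaves every $|\corr(X_i,X_j)|$ unchanged while guaranteeing $\E(X_iY_i)\ge\tau$ for all $i$.

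The key step is to transfer each correlation $\E(X_iY_i)$ onto a common outcome, for which I fix $Y_1$ as the reference. This is exactly the computation already carried out in the text immediately preceding the corollary, instantiated with $x=X_i$, $y=Y_i$, and $z=Y_1$: writing $\E(X_iY_1)=\E(X_iY_i)+\E(X_i(Y_1-Y_i))$ and bounding the second term by Cauchy--Schwarz, using $\E((Y_1-Y_i)^2)=2-2\E(Y_1Y_i)\le 2\epsilon$, yields $\E(X_iY_1)\ge \tau-\sqrt{2\epsilon}$ for every $i$. (For $i=1$ this is immediate, since $\E(X_1Y_1)\ge\tau\ge\tau-\sqrt{2\epsilon}$.) Thus each standardized, sign-adjusted $X_i$ has correlation at least $\tau-\sqrt{2\epsilon}$ with the \emph{single} outcome $Y_1$.

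Finally, I would invoke the ``in particular'' clause of Theorem~\ref{thm: correlation piranha theorem} applied to the covariates $X_1,\dotsc,X_p$, the outcome $y=Y_1$, and the threshold $\tau-\sqrt{2\epsilon}$ in place of $\tau$. This directly produces $\sum_{i\neq j}|\corr(X_i,X_j)|\ge p((\tau-\sqrt{2\epsilon})^2 p-1)$, and since the earlier sign flips do not affect any $|\corr(X_i,X_j)|$, the bound transfers back to the original variables verbatim.

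I expect the only genuine subtlety to be bookkeeping rather than difficulty: one must verify that the sign normalization of the $X_i$ is simultaneously compatible with the pairwise quantities (it is, since $|\corr|$ is sign-blind) and with the transferred lower bound (it is, since we flip precisely so that $\E(X_iY_i)\ge\tau$). A minor caveat worth flagging is that the conclusion is only informative when $\tau>\sqrt{2\epsilon}$; for smaller $\tau$ the right-hand side can be non-positive and the inequality, though still valid, becomes vacuous.
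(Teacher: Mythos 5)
Your proposal follows exactly the paper's own route: the paper proves this corollary precisely by the $x,y,z$ computation you cite (transferring each $|\corr(X_i,Y_i)|\geq\tau$ to a single common outcome via Cauchy--Schwarz, giving a correlation of at least $\tau-\sqrt{2\epsilon}$ with that outcome) followed by the ``in particular'' clause of Theorem~\ref{thm: correlation piranha theorem}, so your reduction, sign normalization, and final invocation coincide with the intended argument.

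One correction to your closing caveat, though: when $\tau<\sqrt{2\epsilon}$, the right-hand side $p((\tau-\sqrt{2\epsilon})^2p-1)$ is \emph{not} non-positive---$(\tau-\sqrt{2\epsilon})^2$ is a square, so for large $p$ this bound is large and positive---and the inequality is then not ``still valid'' either: the squaring step inside the ``in particular'' clause of Theorem~\ref{thm: correlation piranha theorem} is only legitimate when the threshold handed to it is nonnegative. In fact the corollary as literally stated can fail in that regime: take $\epsilon=1$, $\tau=0$, all $Y_i$ equal to a single variable, and $X_1,\dotsc,X_p$ independent; then the hypotheses hold, the left side is $0$, but the right side is $p(2p-1)>0$. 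So both your proof and the paper's implicitly assume $\tau\geq\sqrt{2\epsilon}$; this is an unstated hypothesis of the corollary rather than a defect specific to your argument, but your remark that the bound becomes ``vacuous but valid'' below that threshold has the situation backwards.
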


The bound in Theorem~\ref{thm: correlation piranha theorem} is essentially tight for large $p$. To see this, pick any $0 \leq \tau \leq 1$ and take $X_1, \dotsc, X_p$ to be mean-zero random variables with covariance matrix $\Sigma$ given by
\vspace{-9pt}
\[
  \Sigma_{ij} \ = \ 
  \begin{cases} 
    1 & \text{ if } i = j , \\
    \tau^2 & \text{ if } i \neq j .
  \end{cases}
\]
\vspace{-15pt}

\noindent
If $y = \sum_{j=1}^p X_j$, then for each $i=1,\dotsc,p$,
\vspace{-6pt}
\[
  \corr(X_i,y)
  \ = \ \frac{\E\left( X_i \sum_{j=1}^p X_j \right)}{\sqrt{\E\left(\sum_{j,k} X_j X_k\right)}}
  \ = \ \frac{1+(p-1)\tau^2}{\sqrt{p + p(p-1)\tau^2}}
  \ \stackrel{p\to\infty}{\longrightarrow} \tau .
\]
\vspace{-9pt}

One drawback of Theorem~\ref{thm: correlation piranha theorem} is that the upper bound depends on a coarse measure of interdependence of the covariates, namely the sum of all pairwise correlations $\sum_{i,j} |\corr(X_i, X_j)|$. One might hope that if we have a finer-grained control on the correlation matrix, we should be able to get a stronger result. This is accomplished by the following piranha theorem, which shows that we can instead get an upper bound that depends on the largest eigenvalue of the correlation matrix. However, this comes at the expense of bounding the sum of squared correlations $|\corr(X_i, Y)|^2$, rather than the sum of their absolute values.

\begin{theorem}
\label{thm: correlation eigenvalue piranha theorem}
If $X_1, \dotsc, X_p, y$ are real-valued random variables with finite nonzero variance, then
\vspace{-6pt}
  \[ \sum_{i=1}^p |\corr(X_i,y)|^2 \ \leq \ \lambda_{\max},  \]
\vspace{-6pt}

\noindent%
where $\lambda_{\max}$ is the maximum eigenvalue of the correlation matrix $\Sigma_{i,j} = \corr(X_i,X_j)$.
\end{theorem}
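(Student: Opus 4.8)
The plan is to reduce, exactly as in the proof of Theorem~\ref{thm: correlation piranha theorem}, to the case where $X_1,\dotsc,X_p,y$ all have mean zero and unit variance. Then $\corr(X_i,y) = \E(X_i y)$ and the correlation matrix is $\Sigma = \E(XX^\T)$, where $X = (X_1,\dotsc,X_p)^\T$. Collecting the correlations into the vector $c = \E(yX) \in \R^p$, the quantity to be bounded is exactly
\[ \sum_{i=1}^p |\corr(X_i,y)|^2 \ = \ \|c\|^2 . \]
So the whole theorem amounts to showing $\|c\|^2 \leq \lambda_{\max}$.

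The key step is to rewrite $\|c\|^2$ as an expectation so that Cauchy-Schwarz can be applied in a self-referential way. First I would write
\[ \|c\|^2 \ = \ c^\T c \ = \ c^\T \E(yX) \ = \ \E\bigl( y \,(c^\T X) \bigr) . \]
Setting $W = c^\T X = \sum_{i=1}^p c_i X_i$, Cauchy-Schwarz gives $\E(yW) \leq \sqrt{\E(y^2)}\,\sqrt{\E(W^2)} = \sqrt{\E(W^2)}$, using $\E(y^2)=1$. Next I would control the second moment of $W$ with the spectral (Rayleigh-quotient) bound for the symmetric positive semidefinite matrix $\Sigma$:
\[ \E(W^2) \ = \ c^\T \E(XX^\T)\, c \ = \ c^\T \Sigma c \ \leq \ \lambda_{\max}\|c\|^2 . \]
Chaining these inequalities yields $\|c\|^2 \leq \sqrt{\lambda_{\max}\|c\|^2} = \sqrt{\lambda_{\max}}\,\|c\|$. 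If $\|c\|=0$ the claim is trivial; otherwise dividing through by $\|c\|$ gives $\|c\| \leq \sqrt{\lambda_{\max}}$, that is, $\|c\|^2 \leq \lambda_{\max}$, as desired.

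The argument is elementary and I do not anticipate a genuine obstacle; the one mildly delicate point is the self-referential use of Cauchy-Schwarz, where $\|c\|^2$ sits on the left while $\|c\|$ reappears inside the square root on the right, so one must remember to divide out a factor of $\|c\|$ at the end. An alternative route avoids this telescoping but is less clean: one can note that the full $(p+1)\times(p+1)$ correlation matrix $R = \bigl(\begin{smallmatrix} \Sigma & c \\ c^\T & 1\end{smallmatrix}\bigr)$ is positive semidefinite, take a Schur complement to get $c^\T \Sigma^{+} c \leq 1$, and combine with $c^\T \Sigma^{+} c \geq \|c\|^2/\lambda_{\max}$. That path requires care when $\Sigma$ is singular (one must invoke $c \in \operatorname{range}(\Sigma)$ and use the pseudoinverse, or argue by a limiting perturbation), so I would favor the direct Cauchy-Schwarz computation above, which handles the degenerate case automatically.
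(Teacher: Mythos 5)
Your proof is correct, but it takes a genuinely different route from the paper's. After the same standardization step, you bound $\|c\|^2$ for $c = \E(yX)$ by a self-referential application of Cauchy--Schwarz: writing $\|c\|^2 = \E\bigl(y\,(c^\T X)\bigr) \leq \sqrt{\E\bigl((c^\T X)^2\bigr)} = \sqrt{c^\T \Sigma c} \leq \sqrt{\lambda_{\max}}\,\|c\|$ and dividing out a factor of $\|c\|$, so the only linear-algebraic input is the Rayleigh-quotient bound $c^\T \Sigma c \leq \lambda_{\max}\|c\|^2$. The paper instead diagonalizes: it sets $\tilde{X} = Q^\T X$ where $\Sigma = Q \diag(\lambda_1,\dotsc,\lambda_p)Q^\T$, rescales the components with $\lambda_j > 0$ to unit variance, applies Lemma~\ref{lem: squared covariance bound} (an orthogonality lemma proved by testing the joint covariance matrix of the uncorrelated variables and $y$ against a suitably chosen vector) to conclude $\|\E(y\tilde{X})\|^2 \leq \lambda_1$, and transfers back via orthogonality of $Q$. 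Your argument is shorter and more self-contained: it needs no eigendecomposition, no separate treatment of zero eigenvalues (only the trivial case $\|c\| = 0$, which is harmless since $\lambda_{\max} \geq 1$ for any correlation matrix), and no auxiliary lemma. What the paper's factorization buys is reuse: Lemma~\ref{lem: squared covariance bound} is also the engine behind the regression piranha theorem (Theorem~\ref{thm: regression piranha theorem}), so proving it once serves two results. Your instinct to avoid the Schur-complement alternative was sound---the pseudoinverse and range conditions it requires when $\Sigma$ is singular are exactly the kind of degeneracy your direct computation sidesteps---but your main argument never relies on it, so the proof stands as written.
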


Consider again the case where $X_1, \ldots, X_p$ are uncorrelated, and each has correlation at least $\tau$ with $y$. In this case, the correlation matrix will be the identity matrix, whose largest eigenvalue is 1, and Theorem~\ref{thm: correlation eigenvalue piranha theorem} implies that $\tau \leq 1/\sqrt{p}$, which was the same conclusion provided by Theorem~\ref{thm: correlation piranha theorem}. However, in general Theorems~\ref{thm: correlation piranha theorem} and \ref{thm: correlation eigenvalue piranha theorem} are incomparable since $\sum_{i=1}^p |\corr(X_i,y)|^2 \leq \sum_{i=1}^p |\corr(X_i,y)|$, but 
\[ \lambda_{\max}^2 \leq \sum_{i,j} |\corr(X_i,X_j)|^2  \leq  \sum_{i,j} |\corr(X_i,X_j)| . \vspace{-4pt} \]
As an example of when these theorems can produce different conclusions, one can give a randomized construction of a correlation matrix $\Sigma = A^T A$, where the columns of $A$ are drawn from the uniform distribution over the hypersphere $S^{p-1}$. In this case, if each covariate has correlation at least $\tau$ with $y$, then with high probability the conclusion of Theorem~\ref{thm: correlation piranha theorem} is that $\tau$ is bounded above on the order of $1/\sqrt[4]{p}$, while Theorem~\ref{thm: correlation eigenvalue piranha theorem} gives a much tighter bound on the order of $1/\sqrt{p}$.

The proof of Theorem~\ref{thm: correlation eigenvalue piranha theorem} relies on the following technical lemma, essentially a consequence of orthogonality.
\vspace{-15pt}

\begin{lemma}
\label{lem: squared covariance bound}
If $U_1, \dotsc, U_p, y$ are real-valued random variables with mean zero and unit variance such that $\E(U_i U_j) = 0$ for all $i \neq j$, then $\sum_{i=1}^p  \left(\E U_i y \right)^2 \ \leq \ 1.$
\end{lemma}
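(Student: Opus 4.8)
The plan is to recognize this as an instance of Bessel's inequality in the Hilbert space $L^2$ of square-integrable random variables, equipped with the inner product $\langle A, B\rangle = \E(AB)$. The hypotheses say precisely that $U_1, \dotsc, U_p$ form an \emph{orthonormal} system: the unit-variance, mean-zero condition gives $\E(U_i^2) = 1$, so $\langle U_i, U_i\rangle = 1$, while the uncorrelatedness gives $\langle U_i, U_j\rangle = 0$ for $i \neq j$. Under this identification, each quantity $\E(U_i y)$ is simply the coefficient $\langle U_i, y\rangle$ of $y$ along the basis direction $U_i$, and the claim $\sum_i (\E U_i y)^2 \le 1 = \E(y^2) = \|y\|^2$ is exactly the statement that the squared projection coefficients cannot exceed the squared length of $y$.

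Concretely, I would form the orthogonal projection of $y$ onto the span of the $U_i$, namely $\hat y = \sum_{i=1}^p (\E U_i y)\, U_i$, and then study the residual $y - \hat y$. The single key computation is to expand the nonnegative quantity $\E\bigl((y - \hat y)^2\bigr) \ge 0$. Using orthonormality, the cross terms collapse: $\E(\hat y^2) = \sum_i (\E U_i y)^2$ and $\E(y \hat y) = \sum_i (\E U_i y)^2$ as well, so the expansion yields
\[
  0 \ \le \ \E\bigl((y-\hat y)^2\bigr) \ = \ \E(y^2) - 2\sum_{i=1}^p (\E U_i y)^2 + \sum_{i=1}^p (\E U_i y)^2 \ = \ \E(y^2) - \sum_{i=1}^p (\E U_i y)^2 .
\]
Rearranging and using $\E(y^2) = 1$ gives the desired bound directly.

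There is no real obstacle here beyond bookkeeping: the entire content is that the $U_i$ are orthonormal, so the Pythagorean identity separates the energy of $y$ into the part captured by the projection and the leftover residual, and the residual variance is automatically nonnegative. The only point requiring care is verifying that the cross terms in $\E(\hat y^2)$ and $\E(y\hat y)$ genuinely vanish, which is immediate from $\E(U_i U_j) = 0$ for $i \neq j$; equality in the lemma holds exactly when $y$ lies in the span of the $U_i$, i.e.\ when the residual is zero almost surely. I would state the result either as this self-contained expansion or, more briefly, by simply invoking Bessel's inequality once orthonormality is observed.
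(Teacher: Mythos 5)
Your proof is correct, and it takes a route that is presented quite differently from the paper's. You argue via Bessel's inequality: form the projection $\hat y = \sum_i (\E U_i y)\, U_i$ and expand the nonnegative residual variance $\E\bigl((y-\hat y)^2\bigr) \ge 0$, which with orthonormality gives $\sum_i (\E U_i y)^2 \le \E(y^2) = 1$. The paper instead writes down the joint covariance matrix $\Sigma = \begin{pmatrix} I & a \\ a^\T & 1 \end{pmatrix}$ with $a_i = \E(U_i y)$, tests positive semi-definiteness against the specific vector $v = (-a^\T, \|a\|)^\T$, and computes $v^\T \Sigma v = 2(1-\|a\|)\|a\|^2 \ge 0$, concluding $\|a\| \le 1$. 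The two arguments are ultimately faces of the same fact: your residual expansion is exactly $v^\T \Sigma v$ for the test vector $v = (-a^\T, 1)^\T$, so you are also exploiting positive semi-definiteness of $\Sigma$, just with a different (and arguably more natural) choice of $v$. What your version buys: it yields the bound on $\|a\|^2$ in one step rather than through the factored form $2(1-\|a\|)\|a\|^2$, it is the textbook-recognizable Bessel argument, and it identifies the equality case ($y$ in the span of the $U_i$) for free. What the paper's version buys: it never introduces the projection explicitly and reads as a short, self-contained matrix computation, consistent with the linear-algebraic style of the surrounding proofs.
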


\begin{proof}
Denote the covariance matrix of the random vector $(U_1, \dotsc, U_p, y)^\T$ as
\vspace{-6pt}
\[
  \Sigma
  \ = \ 
  \begin{pmatrix}
    I  & a \\
    a^\T & 1
  \end{pmatrix} ,
\]
\vspace{-9pt}

\noindent%
where $a_i = \E \left( U_i y \right)$ for $i=1,\dotsc,p$.
Define the vector $v = (-a^\T,\|a\|)^\T \in \R^{p+1}$.
Then
\vspace{-6pt}
\[ v^\T \Sigma v \ = \ 2 (1-\|a\|) \|a\|^2 \ \geq \ 0 , \]
\vspace{-18pt}

\noindent%
where the inequality follows from the fact that $\Sigma$ is a covariance matrix and hence positive semi-definite.
We conclude that $\| a \| \leq 1$.
\vspace{-6pt}
\end{proof}

With the above in hand, we turn to the proof of Theorem~\ref{thm: correlation eigenvalue piranha theorem}.

\begin{proof}[Proof of Theorem~\ref{thm: correlation eigenvalue piranha theorem}]
Assume without loss of generality that $X_1, \ldots, X_p, y$ have mean zero and unit variance.
Denote the eigendecomposition of $\Sigma$ as $Q \diag(\lambda_1,\dotsc,\lambda_p) Q^T$, where $\lambda_1 \geq \dotsb \geq \lambda_p \geq 0$ and $Q$ is orthogonal.

Let $\tilde{X} = Q^T X$, where $X = (X_1, \ldots, X_p)$. Then $\tilde{X} = (\tilde{X}_1,\dotsc,\tilde{X}_p)$ is a mean-zero random vector whose covariance matrix is $\diag(\lambda_1,\dotsc,\lambda_p)$.
For $j \in \{1,\dotsc,p\}$ with $\lambda_j = \var(\tilde{X}_j) = 0$, we have $\tilde{X}_j = 0$ almost surely.
We then apply Lemma~\ref{lem: squared covariance bound} to get
\vspace{-6pt}
\[ \|\E(y \tilde X)\|^2 \ = \ \sum_{j=1}^p \E(y\tilde X_j)^2 \ = \ \sum_{j:\lambda_j > 0} \lambda_j \E(y \tilde X_j / \sqrt{\lambda_j})^2 \ \leq \ \lambda_1 \sum_{j:\lambda_j > 0} \E(y \tilde X_j / \sqrt{\lambda_j})^2 \ \leq \ \lambda_1 . \]
\vspace{-12pt}

\noindent%
Then,
\vspace{-9pt}
\[ \sum_{i=1}^p |\corr(X_i,y)|^2 \ = \ \| \E(y {X}) \|^2 \ = \ \| QQ^T \E(y {X}) \|^2 \ = \ \| Q \E(y \tilde{X}) \|^2 \ = \ \|\E(y \tilde{X}) \|^2 \ \leq \  \lambda_1 , \]
\vspace{-6pt}

\noindent%
where we have used the fact that $Q$ is orthogonal.
\end{proof}

\vspace{-6pt}
\subsection{Linear regression}

We next turn to showing that least squares linear regression solutions cannot have too many large coefficients. Specifically, letting $\beta = (\beta_1,\dotsc,\beta_p)^\T \in \R^p$ denote the regression coefficients of least squared error,
\vspace{-6pt}
\begin{align}
\label{eqn: regression}
{\beta} \ = \ \argmin_{\alpha = (\alpha_1,\dotsc,\alpha_p)^\T \in \R^p} \E\left(\left(\alpha_1 X_1 + \cdots + \alpha_p X_p  - y\right)^2 \right),
\end{align}
\vspace{-6pt}

\noindent%
we bound the number of $\beta_i$'s that can have large magnitude.
This is formalized in our next piranha theorem.

\begin{theorem}
\label{thm: regression piranha theorem}
Suppose $X_1, \dotsc, X_p, y$ are real-valued random variables with mean zero and unit variance. If $\beta \in \R^p$ satisfies equation~\eqref{eqn: regression}, then the squared $\ell_2$ norm of $\beta$ satisfies
\vspace{-3pt}
\[ \| \beta \|^2 \ \leq \ \frac{1}{\lambda_{\min}} , \]
\vspace{-12pt}

\noindent%
where $\lambda_{\min}$ is the minimum eigenvalue of the second-moment matrix $\E(XX^\T)$ of $X = (X_1,\dotsc,X_p)^\T$.
\end{theorem}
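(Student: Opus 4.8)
The plan is to exploit the orthogonal-projection characterization of least squares together with the Rayleigh quotient bound for the smallest eigenvalue. Write $M = \E(XX^\T)$ for the second-moment matrix and $b = \E(yX)$ for the vector of cross-moments; since all variables have unit variance, $b_i = \corr(X_i,y)$. First I would record the normal equations characterizing the minimizer in equation~\eqref{eqn:regression}: differentiating the objective (equivalently, using orthogonality of the $L^2$ projection), the optimal $\beta$ satisfies $M\beta = b$. In the only case where the claimed bound is not vacuous, namely $\lambda_{\min} > 0$, the matrix $M$ is invertible and $\beta = M^{-1}b$.

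The crux is to bound the quadratic form $\beta^\T M \beta$ above by $1$. I would do this by computing the residual variance of the fitted value $\hat y = \beta^\T X$. Expanding $\E((y-\hat y)^2) = \E(y^2) - 2\E(y\hat y) + \E(\hat y^2)$ and substituting $\E(y\hat y) = \beta^\T b$ and $\E(\hat y^2) = \beta^\T M \beta = \beta^\T b$ (the last equality coming straight from $M\beta = b$), everything collapses to $\E((y-\hat y)^2) = 1 - \beta^\T M \beta$. Since a variance is nonnegative, this immediately gives $\beta^\T M \beta \leq 1$. Conceptually this is just the Pythagorean statement that the orthogonal projection $\hat y$ of $y$ onto $\mathrm{span}(X_1,\dotsc,X_p)$ has squared $L^2$ norm at most $\E(y^2) = 1$.

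Finally I would invoke the Rayleigh quotient inequality: because $M$ is symmetric and positive semidefinite with smallest eigenvalue $\lambda_{\min}$, we have $\beta^\T M \beta \geq \lambda_{\min}\|\beta\|^2$ for every $\beta$. Chaining the two bounds yields $\lambda_{\min}\|\beta\|^2 \leq \beta^\T M \beta \leq 1$, and dividing through by $\lambda_{\min}$ gives $\|\beta\|^2 \leq 1/\lambda_{\min}$, as claimed.

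The argument is short and each ingredient is standard, so I do not expect a deep obstacle; the one point that requires care is the degenerate case. If $\lambda_{\min} = 0$ then $M$ is singular, the minimizer of~\eqref{eqn:regression} need not be unique, and the stated inequality is trivially true under the convention $1/0 = \infty$, so I would either assume $\lambda_{\min} > 0$ outright or flag this reading. I would also double-check that the normal-equation identity $\E(\hat y^2) = \beta^\T b$ is precisely what makes the residual-variance expansion telescope, since this is the step that links the projection bound to the quadratic form $\beta^\T M \beta$ appearing in the Rayleigh estimate.
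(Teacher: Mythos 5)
Your proof is correct, and it takes a genuinely different route from the paper's. The paper whitens the predictors, defining $\tilde X = (\E(XX^\T))^{-1/2}X$, and then invokes its Lemma~\ref{lem: squared covariance bound} (a positive-semidefiniteness argument applied to the covariance matrix of $(\tilde X_1,\dotsc,\tilde X_p,y)$) to get $\|\E(y\tilde X)\|^2 \le 1$, before converting back to $\beta$ with an eigenvalue bound on $(\E(XX^\T))^{-1}$. You instead work directly with the normal equations $M\beta = b$: the nonnegativity of the residual second moment $\E((y-\hat y)^2) = 1 - \beta^\T M\beta$ gives $\beta^\T M\beta \le 1$, and the Rayleigh quotient bound $\beta^\T M\beta \ge \lambda_{\min}\|\beta\|^2$ finishes. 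The two arguments are algebraically equivalent --- your bound $\beta^\T M\beta = b^\T M^{-1}b \le 1$ is exactly the paper's $\|\E(y\tilde X)\|^2 \le 1$, and both are at heart the Pythagorean statement that the projection of $y$ onto $\mathrm{span}(X_1,\dotsc,X_p)$ cannot have larger norm than $y$ --- but yours is more self-contained and elementary: it needs no matrix square root, no whitening, and no separate lemma. What the paper's route buys is reuse: Lemma~\ref{lem: squared covariance bound} is also the engine behind its Theorem~\ref{thm: correlation eigenvalue piranha theorem}, so structuring the regression proof around that lemma unifies the two results. Your handling of the degenerate case matches the paper's (both dismiss $\lambda_{\min}=0$ as trivial/vacuous), and your observation that the normal equations hold for any minimizer even when $M$ is singular is a small bonus in rigor, though it is not needed for the stated bound.
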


Consider again the setting where $X_1, \dotsc, X_p$ are standardized and uncorrelated. In this case, the second-moment matrix $\E(XX^\T)$ will be the identity matrix, and its minimum eigenvalue $\lambda_{\min}$ will be $1$.
Thus, Theorem~\ref{thm: regression piranha theorem} states for independent covariates, there may be at most $1/\kappa^2$ regression coefficients $\beta_i$ with magnitude larger than $\kappa$. In general, $\lambda_{\min}$ cannot get small without the explanatory variables having sizeable correlations with each other.

\begin{proof}[Proof of Theorem~\ref{thm: regression piranha theorem}]
  The case where $\lambda_{\min} = 0$ is trivial.
  Thus, assume $\lambda_{\min} > 0$.
  In this case, the second-moment matrix $\E(XX^\T)$ is invertible, its inverse has eigenvalues bounded above by $1/\lambda_{\min}$, and
  \begin{align}
    \beta & \ = \ (\E(XX^\T))^{-1} \E(yX) .
  \end{align}
  Define $\tilde X = (\E(XX^\T))^{-1/2} X$, so $\tilde X = (\tilde X_1,\dotsc,\tilde X_p)^\T$ is a vector of mean-zero and unit-variance random variables with $\E(\tilde X_i\tilde X_j) = 0$ for all $i \neq j$.
  By Lemma~\ref{lem: squared covariance bound},
  \vspace{-6pt}
  \[
    \|\E(y\tilde X)\|^2 \ = \ \sum_{j=1}^p \E(y\tilde X_j)^2 \ \leq \ 1 .
  \]
  \vspace{-18pt}

\noindent%
Therefore,
\vspace{-6pt}
  \[
    \|\beta\|^2
    \ = \ \|(\E(XX^\T))^{-1/2} \E(y\tilde X)\|^2
    \ = \ \E(y \tilde X)^\T (\E(XX^\T))^{-1} \E(y \tilde X)
    \ \leq \ \frac1{\lambda_{\min}} \|\E(y \tilde X)\|^2
    \ \leq \ \frac1{\lambda_{\min}} ,
  \]
  where the first inequality uses the upper bound of $1/\lambda_{\min}$ on the eigenvalues of $(\E(XX^\T))^{-1}$.
\end{proof}

\vspace{-6pt}
\subsection{Mutual information}
\label{sec:information}
\vspace{-3pt}

Though many statistical analyses hinge on discovering linear relations among variables, not all do.
Thus, we turn to a more general form of dependency for random variables, mutual information.
Our mutual information piranha theorem will be of a similar form as the previous results, namely that if many covariates share information with a common variable, then they must share information among themselves.

To simplify our analysis, we assume that all the random variables we consider in this section take values in discrete spaces.
For two random variables $x$ and $y$, their mutual information is defined as
\vspace{-6pt}
\[ I(x ; y) \ = \ H(x) - H(x \, | \, y) \ = \ H(y) - H(y \, | \, x), \]
\vspace{-18pt}

\noindent%
where $H(\cdot)$ and $H(\cdot \, | \, \cdot)$ denote entropy and conditional entropy, respectively. These are defined as
\vspace{-6pt}
\begin{align}
H(x) \ &= \ \sum_{x \in \X} p(x) \log \frac{1}{p(x)} , \\
H(y \, | \, x) \ &= \ \sum_{x \in \X, y \in \Y} p(x, y) \log \frac{p(x)}{p(x, y)} ,
\end{align} 
\vspace{-9pt}

\noindent%
where $\X$ (resp.\ $\Y$) is the range of $x$ (resp.\ $y$), $p(x,y)$ is the joint probability mass function of $x$ and $y$, and $p(x)$ is the marginal probability mass function of $x$.

We use the following facts about entropy and conditional entropy. 
\vspace{-6pt}
\begin{fact*}[Chain rule of entropy]
For random variables $X_1, \dotsc, X_p$,
\vspace{-6pt}
\[0  \ \leq \ H(X_1, \dotsc, X_p) \ = \ \sum_{i=1}^p H(X_i \, | \, X_1, \dotsc, X_{i-1}) .\]
\vspace{-15pt}

\noindent%
Moreover, for any other random variable $y$,
\vspace{-9pt}
\[0  \ \leq \ H(X_1, \dotsc, X_p \, | \, y) \ = \ \sum_{i=1}^p H(X_i \, | \, y, X_1, \dotsc, X_{i-1}) .\]
\end{fact*}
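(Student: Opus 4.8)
The plan is to obtain both displays from a single source: the factorization of a joint probability mass function into a product of conditionals, combined with the fact that entropy is the expectation of a pointwise nonnegative quantity. Nonnegativity I would dispose of first: since $0 \leq p(x_1,\dotsc,x_p) \leq 1$ at every atom, each summand $p(x_1,\dotsc,x_p)\log\frac{1}{p(x_1,\dotsc,x_p)}$ is nonnegative, so $H(X_1,\dotsc,X_p) \geq 0$, and the identical termwise argument (now with atoms weighted by the conditional mass given $y$) yields $H(X_1,\dotsc,X_p \mid y) \geq 0$.

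For the chain-rule identity itself, I would start from the elementary factorization
\[ p(x_1,\dotsc,x_p) \ = \ \prod_{i=1}^p p(x_i \mid x_1,\dotsc,x_{i-1}), \]
valid at every atom of positive probability. Taking $-\log$ converts the product into the sum $\sum_{i=1}^p \log\frac{1}{p(x_i \mid x_1,\dotsc,x_{i-1})}$; multiplying by $p(x_1,\dotsc,x_p)$ and summing over all atoms gives $H(X_1,\dotsc,X_p)$ on the left and a sum of $p$ terms on the right. For the $i$-th term I would observe that its summand depends only on the coordinates $x_1,\dotsc,x_i$, so marginalizing out $x_{i+1},\dotsc,x_p$ collapses it to $\sum_{x_1,\dotsc,x_i} p(x_1,\dotsc,x_i)\log\frac{1}{p(x_i \mid x_1,\dotsc,x_{i-1})}$. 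Recognizing $\frac{1}{p(x_i \mid x_1,\dotsc,x_{i-1})} = \frac{p(x_1,\dotsc,x_{i-1})}{p(x_1,\dotsc,x_i)}$ shows that this is exactly $H(X_i \mid X_1,\dotsc,X_{i-1})$ under the paper's definition of conditional entropy, with the tuple $(X_1,\dotsc,X_{i-1})$ playing the role of the conditioning variable. This completes the first identity.

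The conditional version is then obtained by rerunning the same computation inside the distribution given $y$: I would replace the factorization by $p(x_1,\dotsc,x_p \mid y) = \prod_{i=1}^p p(x_i \mid y, x_1,\dotsc,x_{i-1})$, weight by the full joint mass $p(x_1,\dotsc,x_p,y)$, and sum over all atoms including those of $y$. The marginalization and recognition steps proceed verbatim, now carrying $y$ through every conditioning set.

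Since this is a textbook fact, I do not anticipate a genuine conceptual obstacle; the information-theoretic content is carried entirely by the product-to-sum step. The only care required is bookkeeping: matching the marginalization to the paper's exact index convention for $H(\cdot \mid \cdot)$, and handling zero-probability atoms correctly via the usual $0\log 0 = 0$ convention so that restricting the factorization to positive-probability events does not alter either side. An equivalent route, should the direct summation feel unwieldy, is to prove the two-variable case $H(X,Z) = H(X) + H(Z \mid X)$ from the definitions and then induct on $p$.
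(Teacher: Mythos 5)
Your proposal is correct, but there is nothing in the paper to compare it against: the paper states this chain rule as an unproved \emph{Fact} (a textbook result, used only as an ingredient in the proof of the mutual information piranha theorem), so you have supplied a proof where the authors supplied none. Your argument is the standard one---factorize the joint mass function as $p(x_1,\dotsc,x_p)=\prod_{i=1}^p p(x_i \mid x_1,\dotsc,x_{i-1})$, take $-\log$, weight by the joint mass, and marginalize each term down to the coordinates it actually depends on---and you handle the two points that genuinely need care: matching the $i$-th collapsed sum to the paper's specific definition $H(y \mid x)=\sum_{x,y}p(x,y)\log\frac{p(x)}{p(x,y)}$ with the tuple $(X_1,\dotsc,X_{i-1})$ (resp.\ $(y,X_1,\dotsc,X_{i-1})$) as the conditioning variable, and the $0\log 0=0$ convention for zero-probability atoms. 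The nonnegativity claims are also correctly dispatched termwise, since $p(x,y)\le p(x)$ makes every summand of the conditional entropy nonnegative. Your fallback suggestion (prove $H(X,Z)=H(X)+H(Z\mid X)$ and induct) is equally valid and is the route most textbooks take; the direct summation you chose just unrolls that induction in one pass.
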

\vspace{-15pt}

\begin{fact*}[Conditioning reduces entropy]
For random variables $x, y, z$,
\vspace{-6pt}
\[ H(x | y, z) \ \leq \ H(x \, | \, y) \ \leq \ H(x). \]
\end{fact*}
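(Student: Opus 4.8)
The plan is to reduce both inequalities to a single core fact—the nonnegativity of mutual information—whose only analytic ingredient is the concavity of the logarithm; all remaining work is algebraic manipulation of the definitions of $H(\cdot)$ and $H(\cdot\mid\cdot)$ given above. First I would unfold those definitions to rewrite the right-hand gap as a mutual information. Using $H(x) = -\sum_x p(x)\log p(x)$ and $H(x\mid y) = \sum_{x,y} p(x,y)\log\frac{p(y)}{p(x,y)}$, a direct computation yields
\[ H(x) - H(x\mid y) \ = \ \sum_{x,y} p(x,y)\log\frac{p(x,y)}{p(x)p(y)} \ =: \ I(x;y), \]
so that the inequality $H(x\mid y)\leq H(x)$ is exactly the claim that $I(x;y)\geq 0$.

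The heart of the argument is to establish $I(x;y)\geq 0$. I would rewrite $-I(x;y) = \sum_{x,y} p(x,y)\log\frac{p(x)p(y)}{p(x,y)}$ and view the sum as an expectation over the distribution $p(x,y)$, applying Jensen's inequality to the concave function $\log$:
\[ -I(x;y) \ \leq \ \log\sum_{x,y} p(x,y)\cdot\frac{p(x)p(y)}{p(x,y)} \ = \ \log\sum_{x,y} p(x)p(y) \ = \ \log 1 \ = \ 0. \]
The only subtlety here is to restrict the sum to the support on which $p(x,y)>0$, which is harmless since the omitted terms contribute nothing to the entropy; this establishes the second inequality.

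For the first inequality $H(x\mid y,z)\leq H(x\mid y)$, I would run the identical manipulation one level down. Unfolding $H(x\mid y) - H(x\mid y,z)$ produces the conditional mutual information
\[ H(x\mid y) - H(x\mid y,z) \ = \ \sum_y p(y)\sum_{x,z} p(x,z\mid y)\log\frac{p(x,z\mid y)}{p(x\mid y)\,p(z\mid y)}. \]
For each fixed value of $y$ the inner sum is precisely a mutual information computed under the conditional law $p(\cdot,\cdot\mid y)$, hence nonnegative by the argument above; averaging these nonnegative quantities against the nonnegative weights $p(y)$ preserves nonnegativity, so $H(x\mid y,z)\leq H(x\mid y)$. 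The main obstacle is really the lone application of Jensen's inequality, and the only thing to watch is the treatment of zero-probability atoms; once $I(\cdot\,;\cdot)\geq 0$ is secured, both inequalities follow, the first by this conditioning-and-averaging step.
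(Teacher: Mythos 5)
Your proof is correct. Note that the paper itself states this fact without proof---it is invoked as a standard information-theoretic fact in the proof of the mutual information piranha theorem (Theorem~\ref{thm: almost independent mutual info piranha theorem})---so there is no in-paper argument to compare against; what you give is the canonical textbook derivation: rewrite the gap $H(x) - H(x \,|\, y)$ as $I(x;y)$, prove $I(x;y) \geq 0$ by Jensen's inequality applied to the concave logarithm (equivalently, nonnegativity of the Kullback--Leibler divergence between $p(x,y)$ and $p(x)p(y)$), and obtain $H(x \,|\, y,z) \leq H(x \,|\, y)$ by recognizing the gap as a conditional mutual information, i.e., an average over $y$, with weights $p(y)$, of per-$y$ mutual informations that are each nonnegative by the same Jensen step. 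One cosmetic point: once you restrict the Jensen step to the support $\{(x,y) : p(x,y) > 0\}$, the final link in your first display should read $\leq \log 1$ rather than $= \log 1$, since $\sum_{(x,y):\, p(x,y)>0} p(x)\,p(y)$ may be strictly less than $1$; the inequality direction is unaffected, and you flag exactly this subtlety yourself, so the argument stands as written.
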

\vspace{-6pt}

Using these facts, we can prove the following piranha theorem about mutual information.

\begin{theorem}
\label{thm: almost independent mutual info piranha theorem}
Given random variables $X_1, \dotsc, X_p$ and $y$, we have
\vspace{-6pt}
\[ \sum_{i=1}^p I(X_i ; y) \ \leq \ H(y) + \sum_{i=1}^p I(X_i ; X_{-i}), \]
\vspace{-9pt}

\noindent%
where $X_{-i} = (X_1, \dotsc, X_{i-1}, X_{i+1}, \dotsc, X_p)$.
\end{theorem}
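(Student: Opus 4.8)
The plan is to unfold every mutual information into entropies, cancel a common term, and thereby reduce the claim to a purely entropic inequality that follows directly from the two recorded facts. Writing $I(X_i;y) = H(X_i) - H(X_i \, | \, y)$ and $I(X_i;X_{-i}) = H(X_i) - H(X_i \, | \, X_{-i})$, the sum $\sum_{i=1}^p H(X_i)$ appears on both sides of the claimed inequality and cancels. What remains to prove is the equivalent statement
\[ \sum_{i=1}^p H(X_i \, | \, X_{-i}) \ \leq \ H(y) + \sum_{i=1}^p H(X_i \, | \, y). \]

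Next I would bound the left-hand side by the joint entropy of the covariates. Since $X_{-i}$ contains the variables $X_1,\dots,X_{i-1}$, the fact that conditioning reduces entropy gives $H(X_i \, | \, X_{-i}) \leq H(X_i \, | \, X_1,\dots,X_{i-1})$ for each $i$. Summing over $i$ and applying the chain rule of entropy collapses the resulting bound into a single joint entropy:
\[ \sum_{i=1}^p H(X_i \, | \, X_{-i}) \ \leq \ \sum_{i=1}^p H(X_i \, | \, X_1,\dots,X_{i-1}) \ = \ H(X_1,\dots,X_p). \]

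It then suffices to show $H(X_1,\dots,X_p) \leq H(y) + \sum_{i=1}^p H(X_i \, | \, y)$. For this I would first observe $I(X_1,\dots,X_p;y) \leq H(y)$: writing this mutual information as $H(y) - H(y \, | \, X_1,\dots,X_p)$ and using nonnegativity of conditional entropy, and equating it with $H(X_1,\dots,X_p) - H(X_1,\dots,X_p \, | \, y)$, rearranges to $H(X_1,\dots,X_p) \leq H(y) + H(X_1,\dots,X_p \, | \, y)$. Finally, the conditional chain rule followed by conditioning-reduces-entropy gives $H(X_1,\dots,X_p \, | \, y) = \sum_{i=1}^p H(X_i \, | \, y, X_1,\dots,X_{i-1}) \leq \sum_{i=1}^p H(X_i \, | \, y)$. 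Chaining the three displayed bounds yields the theorem.

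I expect no genuine obstacle here: the argument is mechanical once the reduction is in place, and every inequality is an instance of one of the two facts already recorded (the chain rule and conditioning reduces entropy) together with nonnegativity of entropy. The only step requiring a moment's thought is the very first reduction---recognizing that the $\sum_{i=1}^p H(X_i)$ terms cancel, so the claim is equivalent to a comparison of conditional entropies---after which the direction of each conditioning inequality is forced by which conditioning set is larger. If anything is delicate, it is merely bookkeeping: ensuring that $X_{-i}$ is treated as a superset of $\{X_1,\dots,X_{i-1}\}$ so the conditioning inequality points the correct way.
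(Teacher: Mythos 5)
Your proof is correct and is essentially the paper's own argument with different bookkeeping: your three displayed bounds are exactly the paper's inequalities $\sum_{i=1}^p H(X_i \, | \, X_{-i}) \leq H(X_1,\dotsc,X_p)$ (conditioning plus chain rule), $H(X_1,\dotsc,X_p \, | \, y) \leq \sum_{i=1}^p H(X_i \, | \, y)$, and $I(X_1,\dotsc,X_p ; y) \leq H(y)$. The only difference is that you cancel $\sum_{i=1}^p H(X_i)$ at the outset and work with conditional entropies, whereas the paper keeps the mutual informations and rearranges at the end.
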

\vspace{-6pt}
\begin{proof}
Using the definition of mutual information, we have
\vspace{-4pt}
\[ H(X_{i} \, | \, X_{-i}) \ \geq H(X_i) - I(X_i ; X_{-i}). \]
\vspace{-17pt}

\noindent%
Since conditioning reduces entropy, this implies
\vspace{-4pt}
\[ H(X_i \, | \, X_1, \dotsc, X_{i-1}) \ \geq \ H(X_{i} \, | \, X_{-i}) \ = \  H(X_i) -  I(X_i ; X_{-i}). \]
\vspace{-17pt}

\noindent%
Then, by the chain rule of entropy,
\vspace{-9pt}
\begin{equation}
\label{eqn: entropy lower bound}
 H(X_1, \dotsc, X_p) \ = \ \sum_{i=1}^p H(X_i \, | \, X_1, \dotsc, X_{i-1}) \ \geq \ \sum_{i=1}^p H(X_i) -  I(X_i ; X_{-i}) .
\end{equation}
\vspace{-9pt}

\noindent%
The chain rule of entropy combined with the fact that conditioning reduces entropy implies
\vspace{-6pt}
\begin{equation}
\label{eqn: conditional entropy upper bound}
 H(X_1, \dotsc, X_p \, | \, y) \ \leq \  \sum_{i=1}^p H(X_i \, | \, y).
\end{equation}
\vspace{-12pt}

\noindent%
Plugging equations~\eqref{eqn: entropy lower bound} and~\eqref{eqn: conditional entropy upper bound} into our formula for $I(X_1, \dotsc, X_p ; y)$ gives
\vspace{-6pt}
\begin{align}
 I(X_1, \dotsc, X_p ; y) \ &= \ H(X_1, \dotsc, X_p) - H(X_1, \dotsc, X_p \, | \, y)  \\ 
\ &\geq \ \sum_{i=1}^p  H(X_i) - I(X_i ; X_{-i}) - H(X_i \, | \, y) \\
 \ &= \ \sum_{i=1}^p I(X_i ; y) - I(X_i ; X_{-i}).
\end{align}
\vspace{-15pt}

\noindent%
Now we can also write,
\vspace{-6pt}
\[  I(X_1, \dotsc, X_p ; y)  \ = \ H(y) - H(y \, | \, X_1, \dotsc, X_p) \ \leq \ H(y). \]
\vspace{-18pt}

\noindent%
Rearranging yields the theorem.
\end{proof}
\vspace{-3pt}

\noindent%
One corollary of Theorem~\ref{thm: almost independent mutual info piranha theorem} is that for any random variable $y$, there can be at most $p \leq H(y)/\gamma$ random variables $X_1,\dotsc, X_p$ that (a) are mutually independent and (b) satisfy $I(X_i ; y) \geq \gamma$.

\vspace{-6pt}
\section{Correlations in a finite sample}
\vspace{-3pt}
\label{sec:finite}

We now turn our focus back to correlations, this time in a finite sample. Suppose we conduct a survey with data on $p$ predictors $X$ and one outcome of interest $y$ on a random sample of $n$ people, and then we evaluate the correlations between the outcome and each of the predictors. 

We collect the data in an $n \times p $ matrix $X$ with $n>p$, where each of the columns $X_1,\dotsc,X_p \in \R^n$ of $X$ has mean zero and unit $\ell_2$ norm, and we will use $\corr(x,y)$ for $x, y \in \R^n$ (neither in the span of the all-ones vector $\1$) to denote the sample correlation:
\vspace{-9pt}
\begin{equation}\label{20}
\corr(x, y) \ = \ \frac{\sum_{i=1}^n (x_i - \mu_x) (y_i - \mu_y)}
{\sqrt{\sum_{i=1}^n (x_i-\mu_x)^2\sum_{i=1}^n (y_i-\mu_y)^2}},
\end{equation}
\vspace{-6pt}

\noindent%
where $\mu_x = \tfrac1n \sum_{i=1}^n x_i$ and $\mu_y = \tfrac1n \sum_{i=1}^n y_i$.

An application of Theorem~\ref{thm: correlation eigenvalue piranha theorem} tells us that any non-constant vector $y \in \R^n$ satisfies
\vspace{-6pt}
\[ 0 \ \leq \ \sum_{j=1}^p |\corr(X_i, y)|^2 \ \leq \ \sigma_1^2, \] 
\vspace{-12pt}

\noindent%
where $\sigma_1 \geq \dotsb \geq \sigma_p \geq 0$ denote the singular values of $X$. Moreover, it is not hard to see there exists a vector that achieves the upper bound, namely the top singular vector of $X$.
This analysis shows a \emph{worst-case} piranha theorem: a bound on the number of large correlations with all possible response vectors. Stronger results can be obtained if we consider average behavior. Here, we consider a \emph{stochastic} piranha theorem in which we assume that $y$ is uniformly distributed on the unit sphere in $\R^n$. Our result will hold for any choice of radially symmetric random
vector $y$ that is independent of $X$, but we state it for the uniform distribution over the unit sphere for concreteness. We choose a radially symmetric
distribution because we have no reason to give preference to 
one direction over another. Recall the value of studying average as well as worst-case behavior in areas such as random matrix theory.

The following theorem demonstrates this principle, showing that the maximum sum of squared 
correlations, an $O(1)$ quantity in $n$, is generally 
much larger than the expected sum of square
correlations. 
Specifically, the following theorem shows that the expected sum of squared 
correlations decays like $1/n$.
\begin{theorem} \label{thm:corr-average}
Let $y$ be uniformly distributed on the unit sphere in $\R^n$. Then
\vspace{-9pt}
\begin{equation}
  \E\big(\sum_{i=1}^p \corr(X_i, y)^2\big) \ = \ \frac{p}{n-1} .
\end{equation}
\end{theorem}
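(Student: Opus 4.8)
The plan is to reduce the sum of squared correlations to a single quadratic form in a uniformly random \emph{direction}, and then average. First I would exploit that each column $X_i$ is already centered and normalized: since $X_i^\T \1 = 0$, the sample-mean term drops out of the numerator of~\eqref{20}, and the unit-norm assumption makes the first factor under the square root equal to $1$. Writing $P = I - \tfrac1n \1\1^\T$ for the centering projection onto $\1^\perp$ and $u = Py/\|Py\|$ for the normalized centered version of $y$ (well-defined almost surely, since $y$ is non-constant with probability one), this gives $\corr(X_i,y) = X_i^\T u$, and therefore
\[
  \sum_{i=1}^p \corr(X_i,y)^2 \ = \ \sum_{i=1}^p (X_i^\T u)^2 \ = \ u^\T (XX^\T) u .
\]

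The crucial step is to identify the distribution of $u$. Because $y$ is uniform on the unit sphere of $\R^n$, its law is invariant under every orthogonal map; restricting to the orthogonal maps that fix the line spanned by $\1$ (and hence act as rotations of the $(n-1)$-dimensional subspace $\1^\perp$), one checks that such a map $R$ commutes with $P$, so $Py$ transforms to $RPy$ while $\|Py\|$ is unchanged. Hence $u = Py/\|Py\|$ has a law invariant under all rotations of $\1^\perp$, which forces $u$ to be uniformly distributed on the unit sphere of the $(n-1)$-dimensional subspace $\1^\perp$. I expect this symmetry argument to be the main obstacle, in the sense that it is the one step requiring genuine care rather than bookkeeping; everything else is routine.

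Given this, the second-moment matrix of $u$ is $\E(uu^\T) = \tfrac{1}{n-1} P$, the standard identity that a uniform direction in a $d$-dimensional subspace has covariance $\tfrac1d$ times the projection onto that subspace. Taking expectations and using linearity together with the cyclic property of the trace,
\[
  \E\left( \sum_{i=1}^p \corr(X_i,y)^2 \right) \ = \ \operatorname{tr}\!\left( XX^\T \, \E(uu^\T) \right) \ = \ \frac{1}{n-1}\operatorname{tr}(XX^\T P) \ = \ \frac{1}{n-1}\operatorname{tr}(X^\T P X).
\]

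Finally I would simplify the trace. Since each column is centered we have $PX_i = X_i$, so $X^\T P X = X^\T X$ is the Gram matrix of the columns, whose diagonal entries are $\|X_i\|^2 = 1$. Hence $\operatorname{tr}(X^\T P X) = \operatorname{tr}(X^\T X) = \sum_{i=1}^p \|X_i\|^2 = p$, and the claimed value $p/(n-1)$ follows. The only inputs beyond elementary linear algebra are the rotational symmetry of the uniform sphere distribution and the covariance identity for a uniform direction, so once the distribution of $u$ is pinned down the remainder is a short computation.
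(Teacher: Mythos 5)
Your proof is correct, and it reaches the result by a somewhat different route than the paper. The paper first establishes an SVD-based identity (Lemma~\ref{lem:corr}), expressing $\sum_{i=1}^p \corr(X_i,y)^2$ as a weighted sum of squared projections of $y^*$ onto the left singular vectors of $X$; it then extends $U_1,\dotsc,U_p,\tfrac1{\sqrt n}\1$ to an orthonormal basis of $\R^n$, expands $y^*$ in that basis, argues ``by symmetry'' that each coefficient satisfies $\E(a_k^2)=\tfrac1{n-1}$, and finishes with $\operatorname{tr}(X^\T X)=p$. You bypass the SVD entirely: you keep the sum as the quadratic form $u^\T XX^\T u$ with $u = Py/\|Py\|$, prove explicitly that $u$ is uniform on the unit sphere of $\1^\perp$ (the invariance argument under orthogonal maps fixing $\1$ — this is exactly the symmetry the paper invokes in one word, and your justification of it is the more careful of the two), and then conclude via $\E(uu^\T)=\tfrac1{n-1}P$ and cyclicity of the trace. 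Both arguments ultimately rest on the same two facts — rotational symmetry within $\1^\perp$ producing the factor $1/(n-1)$, and $\operatorname{tr}(X^\T X)=\sum_i\|X_i\|^2=p$ — so the difference is one of packaging, but it is a real difference in economy: your version is self-contained and shorter for this theorem in isolation. What the paper's diagonalization buys is reuse: the representation of the sum as $\tfrac1{n-1}\bigl(\lambda_1^2\xi_1+\dotsb+\lambda_p^2\xi_p\bigr)$ in terms of squared singular values is what supports the surrounding discussion in Section~\ref{sec:finite} — the worst-case bound $\sigma_1^2$ attained at the top left singular vector, and the approximation of the sum of squared correlations by a linear combination of independent $\chi^2$ variables — so diagonalizing once yields the whole distributional picture rather than only the expectation.
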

\vspace{-9pt}

\noindent
If $y$ is uniformly distributed on the unit sphere
in $\R^n$, then for large $n$, the distribution of $y$ is well approximated 
by $(Z_1,\dotsc,Z_n)$ the $n$-dimensional multivariate Gaussian with mean zero and covariance $\frac{1}{n} I$.
In particular, $(Z_1,\dotsc,Z_n)$ is spherically symmetric, and
\vspace{-6pt}
\begin{align}
   \E(Z_1^2 + \dotsb + Z_n^2) = 1 \qquad \text{and} \qquad \var{(Z_1^2 + \dotsb + Z_n^2)} = O(1/n^2) .
\end{align}
\vspace{-18pt}

\noindent%
As a consequence, for large $n$, the distribution of sum of squared 
correlations is well approximated by a linear combination of 
independent $\chi^2$ random variables, each with one degree of freedom:  $\frac{1}{n - 1} (\lambda_1^2 \xi_1 + \dotsb + \lambda_p^2 \xi_p)$.

Combining this observation with Theorems~\ref{thm: correlation eigenvalue piranha theorem} 
and \ref{thm:corr-average}, for any $n \times p$ matrix (or sample
of data) $X$, if a vector $y$ is distributed according to a spherically
symmetric distribution, then
$\sum_{i=1}^p \corr(X_i, y)^2$ is supported on $[0, \sigma_1^2]$, has expectation $p / (n - 1)$, and for large $n$ has $O(1 / n^2)$ variance.

\vspace{-6pt}
\section{Discussion and directions for future work}
\vspace{-3pt}

The piranha problem is a practical issue: as discussed in the references in Sections \ref{background} and \ref{priming}, it has interfered with research in fields including social priming, evolutionary psychology, economics, and voting behavior. An understanding of the piranha problem can be a helpful step in recognizing fundamental limitations of research in these fields along with related areas of application such as marketing and policy nudges \citep{nonudge:2022}.  We suspect that a naive interpretation of the butterfly effect has led many researchers and policymakers to believe that there can be many large and persistent effects; thus, there is value in exploring the statistical reasons why this is not likely.  In this way, the piranha problem resembles certain other statistical phenomena such as regression to the mean and the birthday coincidence problem, in that there is a regularity in the world that surprises people, and this regularity can be understood as a mathematical result.  This motivates us to seek theorems that capture some of this regularity in a rigorous way. We are not all the way there, but this seems to us to be a valuable research direction.  

\vspace{-6pt}
\subsection{Bridging between deterministic and probabilistic piranha theorems}
\vspace{-3pt}
Are there connections between the worst-case bounds in Section~\ref{sec:theorems}, constraints on main effects and interactions \citep{rogers2002},
the probabilistic bounds in Section~\ref{sec:finite}, 
priors for the effective number of nonzero coefficients,
and models such as the $R^2$ parameterization of linear regression as proposed by \cite{zhang2020r2d2}?  We can consider two directions.  The first is to consider departures from parametric models such as the multivariate normal and $t$ and work out their implications for correlations and regression coefficients.  The second approach is to obtain limiting results in high dimensions (that is, large numbers of predictors), by analogy to central limit theorems of random matrices.  The idea here would be to consider a $n\times (p+1)$ matrix and then pull out one of the columns at random and consider it as the outcome, $y$, with the other $p$ columns being the predictors, $X$.  One should also be able to connect this with work such as \cite{frank2000} and \cite{knaeble2020} on how regression coefficients change when new predictors are added to a model.

\vspace{-6pt}
\subsection{Regularization, sparsity, and Bayesian prior distributions}
\vspace{-3pt}
There has been research from many directions on regularization methods that provide soft constraints on models with large numbers of parameters.  By ``soft constraints,'' we mean that the parameters are not literally constrained to fall within any finite range, but the estimates are pulled toward zero and can only take on large values if the data provide strong evidence in that direction.

Examples of regularization in non-Bayesian statistics include wavelet shrinkage, lasso regression, estimates for overparameterized image analysis and deep learning networks, and models that grow in complexity with increasing sample size.  In a Bayesian context, regularization can be implemented using weakly informative prior distributions \citep[e.g.,][]{greenland2015} or more informative priors that can encode the assumed sparsity \citep[e.g.,][]{Carvalho+etal:2010}.
Classical regularization is motivated by the goal of optimizing long-run frequency performance, and Bayesian priors represent additional information about parameters, coded as probability distributions.  The various piranha theorems correspond to different constraints on these priors and thus even weakly informative priors should start by encoding these constraints.

From a different direction is the idea that any given data might allow only some small number of effects or, more generally, a low-dimensional structure, to be reliably learned.  More generally, models such as the horseshoe \citep{Carvalho+etal:2010} assume a distribution of effect sizes with a sharp peak near zero and a long tail, which represent a solution to the piranha problem by allowing a large number of predictors without overflowing variance.

\vspace{-6pt}
\subsection{Nonlinear models}
\vspace{-3pt}
So far we have discussed linear regression, with theorems capturing different aspects of the constraint that the total $R^2$ cannot exceed 1 (or some bound less than 1, if some of the variation is by its nature unexplainable because it comes from a random process).  We can make similar arguments for nonlinear regression.

For example, consider a model of binary data with 20 causal inputs, each of which is supposed to have an independent effect of 0.5 on the logistic scale.  Aligning these factors in the same direction would give an effect of 10, enough to change the probability from 0.01 to 0.99, which would be unrealistic in applied fields ranging from marketing to voting where no individual behavior can be predicted to that level of accuracy.  One way to avoid these sorts of extreme probabilities would be to suppose the predictors are highly negatively correlated with each other, but in practice, input variables in social science tend to be positively, not negatively correlated (consider, for example, conservative political ideology, Republican party identification, and various issue attitudes that predict Republican vote choice and have positive correlations among the population of voters).  The only other alternative that allows one to keep the large number of large effects is for the model to include strong negative interactions, but then the effects of the individual inputs would no longer be stable, and any effect would depend very strongly on the conditions of the experiment in which it is studied.  It should be possible to express this reasoning more formally, perhaps in a way similar to long-range dependence models in time series and spatial processes.

\vspace{-6pt}
\subsection{Implications for social science research}
\vspace{-3pt}
Although we cannot directly apply these piranha theorems to data, we see them as providing some relevance to social science reasoning.  

As noted at the beginning of this article, there has been a crisis in psychology, economics, and other areas of social science, with prominent findings and apparently strong effects that do not appear in attempted replications by outside research groups;  see, for example, \citet{gordon2020}.  The replication crisis involves many challenges, including estimating its scale and scope, identifying the statistical errors and questionable research practices that have led researchers to systematically overestimate effect sizes and be overconfident in their findings, and studying the incentives of the scientific publication process that can allow entire subfields to get lost in the interpretation of noise.  Even when individual effects are large, they can apply just to a small subset of the population or just for a short period of time, not leaving persistent effects.

The research reviewed in the present article is related to, but different from, the cluster of ideas corresponding to multiple comparisons, false discovery rates, and multilevel models. Those theories correspond to statistical inference in the presence of some specified distribution of effects, possibly very few nonzero effects (the needle-in-a-haystack problem) or possibly an entire continuous distribution, but without necessarily any concern about how these effects interact.  

The present article goes in a different direction, asking the theoretical question:  under what conditions is it possible for many large and persistent effects to coexist in a multivariate system?  In different ways, our results rule out or make extremely unlikely the possibility of multiple large effects or ``piranhas'' among a set of random variables.  These theoretical findings do not directly call into question any particular claimed effect, but they raise suspicions about a model of social interactions in which many large effects are swimming around, just waiting to be captured by researchers who cast out the net of a quantitative study.

To more directly connect our theorems with social science would require modeling predictor and outcome variables in a subfield, similar to multiverse analysis \citep{steegen2016}.  Bounds can be strengthened with reference to empirical correlations among predictors being considered.  When conducting systematic reviews of evidence, it could be appropriate to consider the potential interactions among various hypothesized causal factors, rather than attempting to combine separate estimates using meta-analysis.
Any general implications for social science would only become clear after consideration of particular research areas.

\vspace{-6pt}
\bibliographystyle{abbrvnat}
\bibliography{piranha-references}

\appendix

\section{Proof of Theorem~\ref{thm:corr-average}}

For any $x = (x_1,\dotsc,x_n)^\T \in \R^n$ such that $x \neq \lambda \1$ for all $\lambda \in \R$ (i.e., $x$ is not in the span of $\1$), we write $x^* \in \R^n$ to denote the ``standardized'' vector given by the formula,
\vspace{-6pt}
\begin{equation}\label{22}
  x^* \ = \
  \frac{x - \frac1n(x^\T \1) \1}
  {\|x - \frac1n(x^\T \1) \1\|}
  \ = \ \frac{x - (\frac1n \sum_{j=1}^n x_j) \1}{\sqrt{ \sum_{i=1}^n (x_i - \frac1n \sum_{j=1}^n x_j)^2 }}
  .
\end{equation}
\vspace{-9pt}

\noindent%
The unit vector $x^*$  in $\R^n$  is orthogonal to $\1$.
Using this notation, we have
\vspace{-6pt}
\begin{equation} \label{eq:corr-vectors}
  \corr(x,y) \ = \ (x^*)^\T (y^*)
\end{equation}
\vspace{-15pt}

\noindent%
for any $x,y \in \R^n$ not in the span of $\1$.

Write the singular value decomposition of $X$ as
\begin{equation}
\vspace{-9pt}
  \label{eq:svd}
  X \ = \ \sum_{k=1}^p \sigma_k U_k V_k^\T ,
\end{equation}
\vspace{-6pt}

\noindent%
where $U_1,\dotsc,U_p \in \R^n$ are orthonormal left singular vectors of $X$,
$V_1,\dotsc,V_p \in \R^p$ are orthonormal right singular vectors of $X$,
and $\sigma_1 \geq \dotsb \geq \sigma_p \geq 0$ are the singular values of $X$.

Recall that we assume $X_1,\dotsc,X_p$ satisfy $\1^\T X_i = 0$ and $\|X_i\| = 1$ for all $i = 1,\dotsc,p$.
This implies the following lemma.
\vspace{-6pt}
\begin{lemma}
  \label{lem:orthogonal-to-one}
  $X_i = X_i^*$ for all $i=1,\dotsc,p$, and $U_k = U_k^*$ for all $k=1,\dotsc,p$.
\end{lemma}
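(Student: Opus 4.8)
The plan is to verify the two asserted identities separately, in each case by checking that the vector in question already has the two properties characterizing a standardized vector in \eqref{22} — orthogonality to $\1$ and unit $\ell_2$ norm — so that applying the standardization map leaves it fixed.

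First I would dispose of the columns $X_i$. By hypothesis $\1^\T X_i = 0$, so the centering term $\frac1n (X_i^\T \1)\1$ in \eqref{22} vanishes and the numerator is just $X_i$; since $\|X_i\| = 1$ by assumption, the denominator equals $1$, and thus $X_i^* = X_i$ follows directly from the definition. This part is essentially immediate.

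The substance lies in showing $U_k = U_k^*$. Unit norm is free, since the $U_k$ are orthonormal by construction, so it suffices to prove $\1^\T U_k = 0$ (this makes the centering term vanish as well, and then $U_k^* = U_k/\|U_k\| = U_k$). For this I would read off the SVD relation $X V_k = \sigma_k U_k$ from \eqref{eq:svd}, using that $V_1,\dotsc,V_p$ are orthonormal. Left-multiplying by $\1^\T$ gives $\sigma_k \1^\T U_k = \1^\T X V_k = (X^\T \1)^\T V_k$, and the assumption that every column of $X$ is centered means $X^\T \1 = 0$, so the right-hand side is $0$. Hence $\1^\T U_k = 0$ whenever $\sigma_k > 0$.

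The one point requiring care — and the only real obstacle — is any left singular vector attached to a zero singular value, for which the displayed relation degenerates to $0 = 0$ and yields no information. I would handle this by the standard observation that such $U_k$ are pinned down only up to the choice of an orthonormal basis of the orthogonal complement of $\mathrm{col}(X)$; since $\1 \perp X_i$ for every $i$ forces $\1 \perp \mathrm{col}(X)$, while $\dim(\1^\perp) = n-1 \geq p$ because $n > p$, we may and do select the entire orthonormal system $U_1,\dotsc,U_p$ inside $\1^\perp$. With $\1^\T U_k = 0$ secured for all $k$, the centering term in \eqref{22} vanishes and unit norm finishes the argument, giving $U_k = U_k^*$.
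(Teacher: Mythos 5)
Your proof is correct and takes essentially the same route as the paper's: the centered, unit-norm columns are fixed points of the standardization map in \eqref{22}, and orthogonality of $\1$ to the column space of $X$ handles the $U_k$. If anything, you are more careful than the paper, whose one-line argument asserts that the range of $X$ ``is spanned by $U_1,\dotsc,U_p$''---literally true only when $X$ has full column rank---whereas you explicitly flag that left singular vectors attached to zero singular values are determined only up to choice, and must be (and, since $\dim(\1^\perp) = n-1 \geq p$, can be) selected inside $\1^\perp$ for the lemma to hold as stated.
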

\begin{proof}
  The assumption on $X_i$ implies that $X_i^* = X_i$ for each $i$.
  Moreover, the assumptions imply that the all-ones vector $\1$ is orthogonal to the range of $X$, which is spanned by $U_1,\dotsc,U_p$.
  Hence $U_k = U_k^*$ for each $k$ as well.
\end{proof}
\vspace{-6pt}

\noindent%
We then take advantage of the following lemma for expressing the sum of squared correlations.

\begin{lemma} \label{lem:corr}
  For any vector $y \in \R^n$ such that $y \neq \lambda \1$ for all $\lambda \in \R$,
  \vspace{-6pt}
  \begin{equation}
    \sum_{i=1}^p \corr(X_i, y)^2
    \ = \ \sum_{k=1}^p \lambda_k^2 (U_k^\T y^*)^2 .
  \end{equation}
\end{lemma}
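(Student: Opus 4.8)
The plan is to rewrite the sum of squared sample correlations as a single squared Euclidean norm and then diagonalize that norm using the singular value decomposition~\eqref{eq:svd}. First I would apply equation~\eqref{eq:corr-vectors} together with Lemma~\ref{lem:orthogonal-to-one}: since each column already satisfies $X_i = X_i^*$, we get $\corr(X_i,y) = (X_i^*)^\T y^* = X_i^\T y^*$, where $y^*$ is the standardized version of $y$ (well defined because $y \neq \lambda \1$). Collecting these over $i = 1,\dotsc,p$ shows that $\corr(X_i,y)$ is exactly the $i$-th coordinate of the vector $X^\T y^* \in \R^p$, and hence $\sum_{i=1}^p \corr(X_i,y)^2 = \|X^\T y^*\|^2$.

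The remaining step is to evaluate $\|X^\T y^*\|^2$ through the SVD. Substituting $X = \sum_{k=1}^p \sigma_k U_k V_k^\T$ gives $X^\T y^* = \sum_{k=1}^p \sigma_k (U_k^\T y^*)\, V_k$, which is a linear combination of the orthonormal right singular vectors $V_1,\dotsc,V_p$ with coefficients $\sigma_k (U_k^\T y^*)$. Orthonormality of the $V_k$ then collapses the squared norm to the sum of squared coefficients, $\|X^\T y^*\|^2 = \sum_{k=1}^p \sigma_k^2 (U_k^\T y^*)^2$, which is the claimed identity once the singular values $\sigma_k$ are identified with the symbols $\lambda_k$ appearing in the statement.

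There is no real obstacle here — the argument is essentially bookkeeping in the SVD — but two points deserve care. First, the correlation must be taken against the standardized vector $y^*$ rather than $y$ itself, and the clean reduction $\corr(X_i,y) = X_i^\T y^*$ relies on $X_i$ being pre-standardized (mean zero, unit norm), which is exactly the content of Lemma~\ref{lem:orthogonal-to-one}. Second, I would keep the role of the factors $U_k^\T y^*$ transparent, since the lemma's value lies in separating the squared singular values $\sigma_k^2$ (a fixed property of the data matrix $X$) from the projections $(U_k^\T y^*)^2$ of the standardized outcome onto the left singular directions; in the stochastic setting of Theorem~\ref{thm:corr-average}, $y^*$ is independent of $X$, so these projections are precisely the random quantities whose distribution drives the expected sum of squared correlations.
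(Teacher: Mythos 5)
Your proof is correct and follows essentially the same route as the paper's: reduce $\corr(X_i,y)$ to $X_i^\T y^*$ via equation~\eqref{eq:corr-vectors} and Lemma~\ref{lem:orthogonal-to-one}, collect the sum into $\|X^\T y^*\|^2$, and expand through the SVD using orthonormality of the $V_k$. Your remark that the $\sigma_k$ in equation~\eqref{eq:svd} must be identified with the $\lambda_k$ in the lemma statement is a fair catch of a notational inconsistency in the paper, not a gap in your argument.
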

\vspace{-9pt}

\begin{proof}
  By direct computation:
  \vspace{-6pt}
  \begin{align}
    \sum_{i=1}^p \corr(X_i, y)^2
    & \ = \ \sum_{i=1}^p \left( (X_i^*)^\T (y^*) \right)^2
    && \text{(by equation~\ref{eq:corr-vectors})} \\
    & \ = \ \sum_{i=1}^p \left( X_i^\T y^* \right)^2
    && \text{(by Lemma~\ref{lem:orthogonal-to-one})} \\
    & \ = \ \|X^\T y^*\|^2 \\
    & \ = \ \left\| \sum_{k=1}^p \lambda_k V_k U_k^\T y^* \right\|^2
    && \text{(by equation~\ref{eq:svd})} \\
    & \ = \ \sum_{k=1}^p \lambda_k^2 (U_k^\T y^*)^2
    && \text{(by Pythagorean theorem).}
    \qedhere
  \end{align}
\end{proof}
\vspace{-6pt}

\begin{proof}[Proof of Theorem~\ref{thm:corr-average}]
  By Lemma~\ref{lem:orthogonal-to-one}, the vectors $U_1,\dotsc,U_p$ are orthogonal to the unit vector $\tfrac1{\sqrt{n}}\1$.
  We extend the collection of orthonormal vectors $U_1,\dotsc,U_p,\tfrac1{\sqrt{n}} \1$ with orthonormal unit vectors $U_{p+1},\dotsc,U_{n-1}$ to obtain an orthonormal basis for $\R^n$.
  With probability $1$, the random vector $y$ is not in the span of $\1$.
  Hence, $y^*$ is well defined and can be written uniquely as a linear combination of the aforementioned basis vectors:
  
  \vspace{-6pt}
  \begin{equation}
    y^* \ = \ a_{1}U_{1} + \dotsb + a_{n-1}U_{n-1} + a_n \tfrac1{\sqrt{n}} \1 ,
  \end{equation}
  
  \vspace{-6pt}
\noindent%
  where
  \vspace{-6pt}
  \begin{equation}
    a_k \ = \
    \begin{cases}
      U_k^\T y^* & \text{if $1 \leq k \leq n-1$} , \\
      0 & \text{if $k=n$ (since $\1^\T y^* = 0$)},
    \end{cases}
  \end{equation}
  \vspace{-6pt}

\noindent%
  and
  \vspace{-6pt}
  \begin{equation}
    1 \ = \ a_1^2 + \dotsb + a_{n-1}^2
  \end{equation}
  \vspace{-9pt}

\noindent%
  (since $y^*$ is a unit vector).
  In particular,
$ 1 \ = \ \E( a_1^2 ) + \dotsb + \E( a_{n-1}^2 )$ ,
  which by symmetry implies that $\E( a_k^2 ) \ = \ \frac1{n-1}$
  for each $k=1,\dotsc,n-1$.  Then, by Lemma~\ref{lem:corr},
  \vspace{-6pt}
  \begin{align}
    \E\left( \sum_{i=1}^p \corr(X_i, y)^2 \right)
    & \ = \ \E\left( \sum_{k=1}^p \lambda_k^2 (U_k^\T y^*)^2 \right)
    \ = \ \sum_{k=1}^p \lambda_k^2 \E( a_k^2 )
    \ = \ \frac1{n-1} \sum_{k=1}^p \lambda_k^2
  \end{align}
  \vspace{-6pt}

\noindent%
  Since $\lambda_i^2$ are the eigenvalues of $X^tX$ and the columns of $X$ have unit $\ell_2$ norm,
  \vspace{-6pt}
  \begin{align}
      \ \frac1{n-1} \sum_{k=1}^p \lambda_k^2 = \frac{p}{n-1}
  \end{align}
  \vspace{-9pt}

\noindent%
   because the trace of $X^tX$ is equal to the sum of its eigenvalues.
    \qedhere
\end{proof}

\end{document}